\newcommand{\myself}{\author{Gianluca Cassese}
                     \address{Universit\`{a} Milano Bicocca and University of Lugano}
                     \email{gianluca.cassese@unimib.it}
                     \curraddr{Department of Statistics, Building U7, Room 239, via Bicocca 
                               degli Arcimboldi 8, 20126 Milano - Italy}}
\newtheorem{theorem}{Theorem}
\theoremstyle{plain}
\newtheorem{corollary}{Corollary}
\newtheorem{definition}{Definition}
\newtheorem{example}{Example}
\newtheorem{lemma}{Lemma}
\newtheorem{remark}{Remark}
\numberwithin{equation}{section}
\newcommand{\Prob}{\mathbb{P}} 
\newcommand{\LIM}{\mathrm{LIM}} 
\newcommand{\B}{\mathfrak{B}} 
\newcommand{\A}{\mathscr{A}} 
\newcommand{\F}{\mathscr{F}}
\newcommand{\R}{\mathbb{R}}
\newcommand{\restr}[2]{#1\vert #2} 
\newcommand{\lrestr}[2]{\left.#1\right\vert #2} 
\newcommand{\rrestr}[2]{#1\left\vert #2\right.} 
\newcommand{\abs}[1]{\vert #1\vert} 
\newcommand{\dabs}[1]{\left\vert #1\right\vert} 
\newcommand{\norm}[1]{\Vert #1\Vert} 
\newcommand{\dnorm}[1]{\left\Vert #1\right\Vert} 
\newcommand{\cond}[3]{#1(#2\vert #3)} 
\newcommand{\lcond}[3]{#1\left(\left.#2\right\vert #3\right)} 
\newcommand{\condP}[2]{P(#1\vert #2)} 
\newcommand{\lcondP}[2]{P\left(\left.#1\right\vert #2\right)}
\newcommand{\net}[3]{\left\langle #1_{#2}\right\rangle_{#2\in #3} } 
\newcommand{\seq}[2]{\net{#1}{#2}{\mathbb{N}}} 
\newcommand{\seqn}[1]{\seq{#1}{n}}
\newcommand{\M}{\mathscr M}
\newcommand{\Pred}{\mathscr{P}}
\newcommand{\Pd}{\bar\Pred^d}
\newcommand{\lP}{\lambda^P}
\newcommand{\ET}{\mathscr{E}}
\newcommand{\EU}{\bar{\ET}}
\newcommand{\D}{\mathscr D}
\newcommand{\DU}{\bar{\D}}
\newcommand{\La}{\mathscr L}
\newcommand{\C}{\mathscr C}
\newcommand{\BV}[1]{\dnorm{#1}_{V}}
\newcommand{\AAT}{(\A_t:t\in T)}
\newcommand{\mT}{(m_t:t\in T)}
\newcommand{\AU}{\bar\A}
\newcommand{\AAU}{\left(\AU_u:u\in U\right)}
\newcommand{\mU}{(\bar m_u:u\in U)}
\newcommand{\set}[1]{\mathbf{1}_{#1}}
\newcommand{\OI}{\bar\Omega}
\begin{document}
\title[Supermartingale Decomposition]{Supermartingale Decomposition 
with a General Index Set}
\myself
\date
\today

\subjclass[2000]{Primary .} 
\keywords{Dol\'{e}ans-Dade measure, Doob-Meyer decomposition, Finitely 
additive measure, Finitely additive process, Finitely additive supermartingales, 
Supermartingales.}

\begin{abstract}
We prove results on the existence of Dol\'{e}ans-Dade measures and of the Doob-Meyer
decomposition for supermartingales indexed by a general index set.
\end{abstract}

\maketitle

\section{Introduction}

By Doob's theorem, supermartingales indexed by the natural numbers decompose into 
the difference of a uniformly integrable martingale and an increasing process. The 
relative ease of working with increasing processes explains the prominent role of 
this result in stochastic analysis and in the theory of stochastic integration. Meyer 
\cite{meyer} then proved that, under the \textit{usual conditions}, Doob's decomposition 
exists for right continuous supermartingales indexed by the positive reals if and 
only if the class $D$ property is satisfied. Dol\'{e}ans-Dade \cite{doleans-dade} 
was the first to represent supermartingales as measures over predictable rectangles 
and to prove that a supermartingale is of class $D$ if and only if its Dol\'{e}ans-Dade 
measure is countably additive. This line of approach has then become dominant in the 
work of authors such as F\"{o}llmer \cite{follmer} and Metivier and Pellaumail 
\cite{metivier pellaumail}. 

In this paper we cosider the case of processes indexed by general index sets, illustrated 
in the following
\begin{example} 
\label{example control}
Let $\C$ be a collection of $\F\otimes\mathcal B(\R_+)$ measurable functions
on $\Omega\times\R_+$ and $\mathscr U\subset\F\otimes\mathcal B(\R_+)$ an algebra. 
For each $u\in\mathscr U$, let 
$\mathscr B_u=\sigma\{\int_{v^c}f(c)dt:\ v\in\mathscr U,\ u\subset v,\ c\in\C\}$. 
Assume that $\C$ possesses the following property: $c,c^\circ\in\C$, $u\in\mathscr U$ and 
$F\in\mathscr B_u$ imply $c^\circ+(c-c^\circ)\set{F}\set{u^c}\in\C$. Let $c^*$ be a solution 
to the problem
\begin{equation}
\label{control}
\sup_{c\in\C}P\int f(c_t)dt
\end{equation}
If $c^\circ\in\C$ and $V_u=\int_{u^c}f(c^\circ)dt+\lcondP{\int_uf(c^*)dt}{\mathscr B_u}$ 
then $(V_u:u\in\mathscr U)$ is a supermartingale. In fact, $v\subset u$ implies 
$\mathscr B_u\subset\mathscr B_v$ and 
$\condP{V_v}{\mathscr B_u}=V_u+\lcondP{\int_{u\cap v^c}[f(c^\circ)-f(c^*)]dt}{\mathscr B_u}\leq V_u$.
\end{example}

We prove, in Theorem \ref{theorem Doleans}, a necessary and sufficient condition, the 
class $D_0$ property, for the existence of a Dol\'{e}ans-Dade measure associated to a 
supermartingale. Based on this, we establish, in Theorem \ref{theorem DM}, a
sufficient condition, the class $D_*$ property, for the existence of a Doob Meyer 
decomposition. In Corollary \ref{corollary DM uiv} we consider supermartingales
of uniformly integrable variation. The two key properties involved in our results are 
the possibility of extending the original supermartingale to a larger filtration and some 
form of the optional sampling theorem. From a mathematical perspective we exploit 
extensively results from the theory of finitely additive measures.

Among the many papers devoted to the theory of stochastic processes indexed by
general sets, the ones more directly related to ours are those of Dozzi, Ivanoff and 
Merzbach \cite{dozzi}, who obtain a form of the Doob Meyer decomposition, of Ivanoff 
and Merzbach \cite{ivanoff}, who extend such decomposition by a localization argument, 
and, to a much lesser extent, of De Giosa and Mininni \cite{de giosa}. These works, which draw 
in turn from an unpublished paper of Norberg \cite{norberg}, apply classical techniques, 
based on right continuity, separability and uniform integrability and, to make this 
possible, introduce a number of set-theoretical as well as of topological restrictions 
on the index set. These assumptions represent the main difference with the approach 
followed here\footnote{A more explicit comparison with this approach will be made in 
the following sections. See in particular Remark \ref{remark extension}.} which owes, 
perhaps, more to the work of Mertens \cite{mertens} than to that of Meyer \cite{meyer}.

\section{Preliminaries and notation}
We fix some general notation, mainly in accordance with \cite{bible}. When $S$ is a 
set $2^S$ denotes its power set and $\set{S}$ its indicator function. If 
$\Sigma\subset2^S$, typically an algebra, the symbols $ba(\Sigma)$ (resp. $ca(\Sigma)$) 
and $\B(\Sigma)$ designate the spaces of finitely (resp. countably) additive set 
functions on $\Sigma$ and the closure of the set of $\Sigma$ simple functions with 
respect to the supremum norm, respectively (however we prefer $\B(S)$ to $\B(2^S)$). 
$\Prob(\Sigma)$ will be the subcollection of $ba(S)_+$ consisting of those elements $Q$
whose restriction $\restr{Q}{\Sigma}$ to $\Sigma$ is a countably additive probability measure. 
Finitely additive measures will throughout be identified with the linear 
functional arising from the corresponding expected value so that we prefer writing 
$\mu(f)$ rather than $\int fd\mu$. We recall that if $\Sigma$ is an algebra of subsets 
of $S$ and $\mu\in ba(\Sigma)_+$ then there exists $\bar\mu\in ba(2^S)_+$ such that 
$\restr{\bar\mu}{\Sigma}=\mu$, \cite[theorem 3.2.10, p. 70]{rao}.

We take two sets $\Omega$ and $I$ as given, put $\OI\equiv\Omega\times I$ and write 
$ba(\OI)$ and $\B(\OI)$ more briefly as $ba$ and $\B$. $2^{\OI}$ is ordered by reverse 
inclusion that is $s\leq t$ whenever $t\subset s$; $s<t$ means $s\leq t$ and 
$s\cap t^c\neq\varnothing$. $t(i)$ denotes the $i$-section 
$\{\omega\in\Omega:(\omega,i)\in t\}$ of $t$ and 
$\{s<t\}=\bigcup_{i\in I}(s\cap t^c)(i)$: thus $\{s<\varnothing\}$ is 
just the projection of $s$ on $\Omega$. The special case where $I=\R_+$ and some
probability measure $P$ on $\Omega$ is given will be referred to as the classical theory.

Also given are a collection $T$ of subsets of $\OI$ containing $\{\OI\}$ and a filtration 
$\Bbb A=\AAT$, that is an increasing collection of algebras of subsets of $\Omega$ satisfying:
\begin{equation}
\label{filtration}
F\cap (s\cap t^c)(i)\in\A_s\cap\A_t\quad\text{and}\quad F\cap \{s<t\}\in\A_s\cap\A_t
\qquad s,t\in T,\ F\in\A_s,\ i\in I
\end{equation} 
Define also $\A=\bigcup_{t\in T}\A_t$ and $\F=\sigma\A$. 

In the classical theory, $T$ would tipically be a family of stochastic intervals
such as $]]\tau,\infty[[$ or $[[\tau,\infty[[$ with $\tau$ a stopping time; the
case where each $t\in T$ is deterministic, i.e. of the form $t=\Omega\times J$ with
$J\subset I$, is of course a possibility. Dozzi et al. \cite{dozzi} take $T$ to be a 
lattice of closed subsets of a (locally) compact topological space and assume that $T$ is 
closed with respect to countable intersections. One should remark that in the present 
setting the second inclusion in (\ref{filtration}) does not follow from the first one 
and must therefore be explicitly assumed. 

Repeatedly in what follows we shall take into consideration an alternative filtration
$\bar{\Bbb A}=\AAU$ where $U\subset2^{\OI}$ is closed under union and intersection, 
$T\cup\{\varnothing\}\subset U$ and $\A_t\subset\AU_t$ for all $t\in T$. As a matter 
of notation, the same symbol denoting some object defined relatively to $\Bbb A$ will 
be used with an upper bar to designate the corresponding object defined relatively to 
$\bar{\Bbb A}$.

\section{Finitely Additive Supermartingales} 
\label{sec finitely additive}

A finitely additive process (on $\Bbb A$) is an element $m=\mT$ of the product space 
$\prod_{t\in T}ba(\A_t)$. We shall always use the convention of letting 
$m_\varnothing$ be the null measure on $\A$. A finitely additive process $m$ 
is bounded if $\norm{m}\equiv\sup_{t\in T}\norm{m_t}<\infty$; it is of bounded 
variation if
\begin{equation}
\label{BV}
\BV{m}\equiv\sup\sum_{n=1}^N\dnorm{m_{s_n}-\rrestr{m_{t_n}}{\A_{s_n}}}<\infty
\end{equation} 
the supremum being over the family $\D$ of all finite, disjoint collections 
\begin{equation}
\label{delta}
d=\{s_n\cap t^c_n:n=1,\ldots,N\}\quad\text{with}\quad s_n,t_n\in T\cup\{\varnothing\},\ 
s_n\leq t_n\qquad n=1,\ldots,N
\end{equation}
Processes of bounded variation are 
thus bounded. Our definition (\ref{BV}) slightly departs from the original 
one of Fisk \cite{fisk} as we do not require $t_n\leq s_{n+1}$ for all $n$.

We speak of the finitely additive process $m$ as a finitely additive 
supermartingale if $\restr{m_t}{\A_s}\leq m_s$ 
for all $s,t\in T$ such that $s\leq t$. $f:\OI\to\R$ is an elementary process, 
$f\in\ET$, if it may be written in the form
\begin{equation}
\label{elementary}
f=\sum_{n=1}^Nf_n\set{t_n}\quad\text{with}\quad f_n\in\B(\A_{t_n}),\ 
t_n\in T\qquad n=1,\ldots,N
\end{equation}
$\Pred$ denotes the (predictable) $\sigma$ algebra generated by the elementary 
processes. We write $f\in\ET^*$ if the requirement $f_n\in\B(\A_{t_n})$ in 
(\ref{elementary}) is replaced by $f_n\in\B(\F)$ for $n=1,\ldots,N$.

A finitely additive supermartingale $m$ is strong if it is of bounded variation and
\begin{equation}
\label{strong}
0=\sum_{n=1}^Nf_n\set{t_n} \in\ET\qquad\text{imply}\qquad
\sum_{n=1}^Nm_{t_n}(f_n)=0
\end{equation}
As $\set{\{s<t\}^c}\set{s\cap t^c}=0$, a strong finitely additive 
supermartingale $m$ must satisfy $m_s(\{s<t\}^c)=m_{t}(\{s<t\}^c)$ for all $s\leq t$: 
implicit in (\ref{strong}) is thus a version of the optional sampling 
theorem. It is known that this theorem is far from obvious with a general 
index set (see \cite{hurzeler} and \cite{kurtz}) and it may actually fail even with
$\R_+$ as the index set unless the usual conditions hold (see 
\cite[p. 393]{dellacherie meyer}, from which our terminology is borrowed). 
The assumption that a finitely additive supermartingale is strong will thus
play a major role. 

As argued by Dozzi et al. \cite{dozzi}, for many a purpose the index set 
$T$ does not possess enough mathematical structure, a problem that induces 
to consider possible extensions. The following example illustrates that this
may not be entirely obvious.

\begin{example}
\label{example extension}
Let $\bar m=\mU$ be a finitely additive supermartingale on $\bar{\Bbb A}$ and 
define the semi-algebra
\begin{equation}
\label{semi ring}
U(d)=\{u\cap v^c:u,v\in U,\ u\leq v\} 
\end{equation}
For each $u\cap v^c\in U(d)$ one may be tempted to extend $\bar m$ to $U(d)$ by letting
\begin{equation}
\label{norberg}
\bar m_{u\cap v^c}=(\bar m_u-\restr{\bar m_v}{\AU_u})\in ba(\AU_u)
\end{equation}
Such an extension however need not be well defined if $\bar m$ is not strong.
Consider in fact $u_1,v_1,u_2,v_2\in U$ such that $u_1\leq v_1\leq u_2\leq v_2$ and 
$\bar m_{u_1}(\Omega)>\bar m_{v_1}(\Omega)>\bar m_{u_2}(\Omega)=\bar m_{v_2}(\Omega)$. 
If $u_1\cap v_1^c=u_2\cap v_2^c$ then (\ref{norberg}) implies 
$\restr{(\bar m_{u_1}-\bar m_{v_1})}{\AU_{u_1}}=\restr{(\bar m_{u_2}-\bar m_{v_2})}{\AU_{u_1}}=0$
which contradicts $(\bar m_{u_1}-\bar m_{v_1})(\Omega)>0$. In the case of classical 
processes, when each $\bar m_u\in ba(\AU_u)$ is replaced by a corresponding $\sigma\AU_u$ measurable 
random quantity $X_u$, assume that $X_{u_2}=X_{v_2}$. Then (\ref{norberg}) implies 
$X_{u_1}=X_{v_1}$ which is contradictory if $X_{v_1}$ fails to be $\sigma\AU_{u_1}$ measurable.
\end{example}

\begin{remark}
\label{remark extension}
In many papers (see, e.g.,  \cite[p. 74]{de giosa}, \cite[proposition 2.1]{dozzi} 
or \cite[p. 85]{ivanoff}) the extension (\ref{norberg}) of a process indexed by a 
lattice of sets to its generated semi-ring is claimed to exist by virtue of 
\cite[proposition 2.3, p. 9]{norberg}. This claim is however not correct for all 
lattices and all processes, as Example \ref{example extension} shows. In fact 
in the paper by Norberg, the index set consists of all lower sets 
$\downarrow f\equiv\{g\in\mathscr L:g\leq f\}$ of elements $f$ of 
some lattice $\mathscr L$ satisfying the property\footnote{The proof of 
\cite[lemma 2.2 ]{norberg} uses property (\ref{antisymmetric}) without explicitly 
assuming it.}
\begin{equation}
\label{antisymmetric}
f\leq g\quad\text{and}\quad g\leq f\quad\text{imply}\quad f=g,\qquad f,g\in\mathscr L
\end{equation}
In this setting if $f_i,g_i\in\mathscr L$ and $g_i\leq f_i$ for $i=1,2$ 
then $\downarrow f_1\cap(\downarrow g_1)^c=\downarrow f_2\cap(\downarrow g_2)^c$
if and only if either $f_i=g_i$ for $i=1,2$ or $f_1=f_2$ and $g_1=g_2$. Thus
Example \ref{example extension} does not apply.
\end{remark}

Implicit in the above remarks is the importance of the set-theoretic properties 
of the index set. This is specially true for what concerns the representation of 
elementary processes. Let, for example, $f=\sum_{k=1}^Kf'_k\set{u'_k}\in\EU$. 
Denote by $\{\pi_1,\ldots,\pi_N\}$ the collection of non empty subsets of 
$\{1,\ldots,K\}$ and, for $n=1,\ldots,N$, define (with the convention 
$\bigcup\varnothing=\varnothing$)
$$
u_n=\bigcap_{k\in\pi_n}u'_k,\quad v_n=u_n\cap\bigcup_{j\notin\pi_n}u'_j 
\quad\text{and}\quad f_n=\sum_{k\in\pi_n}f'_k
$$
Then
\begin{equation}
\label{canonical}
f=\sum_{n=1}^Nf_n\set{u_n\cap v_n^c}\quad\text{where}\quad f_n\in\mathscr S(\AU_{u_n})\ 
n=1,\ldots,N\quad\text{and}\quad\{u_n\cap v_n^c:n=1,\ldots,N\}\in\DU
\end{equation}
(see (\ref{delta}) for the definition of $\DU$). We will henceforth refer to 
(\ref{canonical}) as the \textit{canonical representation} of $f$. Another noteworthy 
implication of the set theoretic properties of $U$ is that the collection $\DU$ defined 
as in (\ref{delta}) is a directed set relatively to refinement, that is if we write 
$\delta'\geq\delta$ whenever $\delta,\delta'\in\DU$ and each $u\cap v^c\in\delta$ may 
be written as $\bigcup_{n=1}^Nu_n\cap v_n^c$ with $u_n\cap v_n^c\in\delta'$ $n=1,\ldots,N$.

Eventually we have the following (with $f^+=f\vee 0$):
\begin{lemma} 
\label{lemma lattice}
Let $\bar m$ be a positive, strong, finitely additive supermartingale on $\bar{\Bbb A}$. Then
\begin{equation}
\label{domination}
\sum_{n=1}^N\bar m_{u'_n}(f'_n)\leq\norm{\bar m}_V\dnorm{f^+}\qquad f=\sum_{n=1}^Nf'_n\set{u'_n}\in\EU
\end{equation}
\end{lemma}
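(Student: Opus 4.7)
The plan is to reduce the sum $\sum_n \bar m_{u'_n}(f'_n)$ to one indexed by a disjoint decomposition in $\DU$ by means of the canonical representation~(\ref{canonical}), and then to bound each resulting term by $\dnorm{f^+}$ times a piece of the variation norm of $\bar m$. Both the reduction and the estimation rely essentially on the strong property of $\bar m$.

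First I would apply~(\ref{canonical}) to $f$ to obtain a second representation $f = \sum_{k=1}^M g_k \set{w_k \cap z_k^c}$ with $g_k \in \B(\AU_{w_k})$, $w_k \leq z_k$ in $U$ and $\{w_k \cap z_k^c : k=1,\ldots,M\} \in \DU$. Since $z_k \subset w_k$, we have $\set{w_k \cap z_k^c} = \set{w_k} - \set{z_k}$, and equating the two representations of $f$ gives
\[
\sum_{n=1}^N f'_n \set{u'_n} \; + \; \sum_{k=1}^M g_k \set{z_k} \; - \; \sum_{k=1}^M g_k \set{w_k} \; = \; 0 \in \EU .
\]
Because the filtration is increasing, $g_k \in \B(\AU_{w_k}) \subset \B(\AU_{z_k})$, so the strong property~(\ref{strong}) applies and yields
\[
\sum_{n=1}^N \bar m_{u'_n}(f'_n) \; = \; \sum_{k=1}^M \bigl(\bar m_{w_k} - \rrestr{\bar m_{z_k}}{\AU_{w_k}}\bigr)(g_k) .
\]

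Next I would localize each summand. The strong property also implies, via the identity $\set{\{w_k<z_k\}^c}\set{w_k \cap z_k^c}=0$ noted just after~(\ref{strong}), that $(\bar m_{w_k} - \bar m_{z_k})(h) = 0$ for every $h \in \B(\AU_{w_k})$ supported on $\{w_k<z_k\}^c$; hence $(\bar m_{w_k} - \bar m_{z_k})(g_k) = (\bar m_{w_k} - \bar m_{z_k})(g_k \set{\{w_k<z_k\}})$. On the set $\{w_k<z_k\}$, for every $\omega$ there exists some $i$ with $(\omega,i) \in w_k \cap z_k^c$, and the construction in~(\ref{canonical}) then forces $g_k(\omega) = f(\omega,i) \leq f^+(\omega,i) \leq \dnorm{f^+}$. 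Since $\bar m \geq 0$ is a supermartingale, the measure $\bar m_{w_k} - \rrestr{\bar m_{z_k}}{\AU_{w_k}}$ is positive, so each summand is at most $\dnorm{f^+} \cdot \dnorm{\bar m_{w_k} - \rrestr{\bar m_{z_k}}{\AU_{w_k}}}$. Summing over $k$ and invoking $\{w_k \cap z_k^c\} \in \DU$ in the definition of $\norm{\bar m}_V$ yields the claim.

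The main obstacle I foresee is this localization step: although $g_k$ is bounded, its values outside the projection of $w_k \cap z_k^c$ onto $\Omega$ are essentially arbitrary, so no global pointwise bound of $g_k$ by $\dnorm{f^+}$ is available. The strong property, which encodes a form of the optional sampling theorem and is the only structural hypothesis on $\bar m$ beyond positivity and bounded variation, is precisely what makes it legitimate to discard these uncontrolled values and reduce to the pointwise estimate on $\{w_k<z_k\}$.
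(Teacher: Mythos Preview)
Your proof is correct and follows essentially the same route as the paper's: pass to the canonical representation~(\ref{canonical}), use the strong property to rewrite the sum over the disjoint pieces and to localize each increment $(\bar m_{w_k}-\bar m_{z_k})$ to the projection $\{w_k<z_k\}$, and then exploit positivity of the increment together with the pointwise bound $g_k\le\dnorm{f^+}$ on that projection. The only cosmetic difference is that the paper first extends each $\bar m_{u_k}-\rrestr{\bar m_{v_k}}{\AU_{u_k}}$ to a positive element of $ba(2^\Omega)$ and bounds by $\mu_f\bigl(\sup_{i\in I}f(i)\bigr)$, whereas you bound directly by the constant $\dnorm{f^+}$ and thereby avoid the extension step.
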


\begin{proof} 
Fix $f=\sum_{n=1}^Nf'_n\set{u'_n}\in\EU$ and let $\sum_{k=1}^Kf_k\set{u_k\cap v^c_k}$
be its canonical representation as in (\ref{canonical}). For $k=1,\ldots,K$ let 
$\mu_{f,k}\in ba_+$ be an extension of $\bar m_{u_k}-\restr{\bar m_{v_k}}{\AU_{u_k}}$ 
to $2^\Omega$. Put $\mu_f=\sum_{k=1}^K\mu_{f,k}$: the inequality 
$\norm{\mu_f}\leq\norm{\bar m}_V$ is obious. Thus
$$
\sum_{n=1}^N\bar m_{u'_n}(f'_n)=\sum_{k=1}^K (\bar m_{u_k}-\bar m_{v_k})(f_k\set{\{u_k<v_k\} })
=\sum_{k=1}^K \mu_{f,k}(f_k\set{\{u_k<v_k\} })
\leq\mu_f\left(\sup_{i\in I}f(i)\right)
$$
the first equality being a consequence of the fact that $\bar m$ is strong. 
\end{proof}

Lemma \ref{lemma lattice} legitimates a special interest for conditions that permit 
the construction of an extension of our original finitely additive supermartingale $m$, 
that is of a finitely additive process $\bar m$ defined on $\bar{\Bbb A}$ such that 
$\restr{\bar m_t}{\A_t}=m_t$ for all $t\in T$ (in symbols $\restr{\bar m}{\Bbb A}=m$). 
It turns out that the problem of extending $m$ is related to the time honoured question 
of whether finitely additive supermartingales may be represented as measures on $\OI$, 
i.e. the existence of Dol\'{e}ans-Dade measures. 

\begin{theorem}
\label{theorem Doleans}
Let $m$ be a finitely additive process on $\Bbb A$. The following are equivalent:
\begin{enumerate}
\item[(\textit{i})] $m$ is of class $D_0$, that is 
\begin{equation}
\label{D0}
\sup\left\{\sum_{n=1}^Nm_{t_n}(f_n):1\geq\sum_{n=1}^Nf_n\set{t_n}\in\ET\right\}<\infty
\end{equation}
\item[(\textit{ii})] $m$ admits a Dol\'{e}ans-Dade measure, that is an 
element of the set 
$$
\M(m)=\left\{\lambda\in ba_+:\lambda\left(F\set{t}\right)=m_t(F),\ F\in\A_t,\ t\in T\right\}
$$
\item[(\textit{iii})] there exists a positive, strong, finitely additive 
supermartingale $\bar m$ on $\bar{\Bbb A}$ such that $\restr{\bar m}{\Bbb A}=m$.
\end{enumerate}
\end{theorem}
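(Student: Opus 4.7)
The plan is to establish the cycle (i) $\Rightarrow$ (ii) $\Rightarrow$ (iii) $\Rightarrow$ (i); the Hahn--Banach construction of the Dol\'eans--Dade measure, (i) $\Rightarrow$ (ii), carries the real content. The two bookkeeping directions go quickly. For (ii) $\Rightarrow$ (i), any elementary process $f=\sum_n f_n\set{t_n}\leq 1$ satisfies $\sum_n m_{t_n}(f_n)=\lambda(f)\leq\norm\lambda$ by linearity and positivity of $\lambda$, so the supremum in (\ref{D0}) is bounded by $\norm\lambda$. For (iii) $\Rightarrow$ (i), Lemma \ref{lemma lattice} applied to $\bar m$ yields $\sum_n\bar m_{u'_n}(f'_n)\leq\norm{\bar m}_V\norm{f^+}$ on $\EU$; restricting to $\ET\subset\EU$ and using $\restr{\bar m}{\Bbb A}=m$ gives (\ref{D0}) for $m$ with constant $\norm{\bar m}_V$.

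For (ii) $\Rightarrow$ (iii), given $\lambda\in\M(m)$ the plan is to take $U$ to be the sublattice of $2^{\OI}$ generated by $T\cup\{\varnothing\}$, set $\AU_u=2^\Omega$ for every $u\in U$, and define $\bar m_u(F)=\lambda(F\set{u})$. The filtration requirement (\ref{filtration}) for $\bar{\Bbb A}$ is then automatic since every $\AU_u$ is the full power set. Positivity, finite additivity in $F$, and the supermartingale inequality $\bar m_v(F)\leq\bar m_u(F)$ whenever $v\subset u$ follow from $\lambda\in ba_+$ together with the pointwise bound $F\set{v}\leq F\set{u}$; strongness is immediate from linearity of $\lambda$, because $\sum_n f_n\set{u_n}=0$ forces $\sum_n\bar m_{u_n}(f_n)=\lambda\bigl(\sum_n f_n\set{u_n}\bigr)=0$; bounded variation follows from $\sum_n\norm{\bar m_{s_n}-\bar m_{t_n}}=\sum_n\lambda(\set{s_n\cap t_n^c})\leq\norm\lambda$ over any disjoint family of the shape (\ref{delta}); and $\restr{\bar m_t}{\A_t}=m_t$ is precisely the defining relation of $\M(m)$.

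The heart of the argument is (i) $\Rightarrow$ (ii). Let $M$ denote the finite supremum in (\ref{D0}), and tentatively set $L\bigl(\sum_n f_n\set{t_n}\bigr)=\sum_n m_{t_n}(f_n)$ on $\ET$. The scaling observation ``if $\sum_n f_n\set{t_n}=0$ then $c\sum_n f_n\set{t_n}\leq 1$ for every real $c$, so (\ref{D0}) gives $c\sum_n m_{t_n}(f_n)\leq M$ for all $c$ and hence $\sum_n m_{t_n}(f_n)=0$'' shows that $L$ is well defined, and therefore linear, on $\ET$. The same rescaling yields the sublinear domination $L(f)\leq p(f):=M\sup f^+$: divide by $\sup f^+$ when this is positive, and send $c\to\infty$ when $\sup f^+=0$. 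The Hahn--Banach theorem then extends $L$ to $\tilde L:\B(\OI)\to\R$ with $\tilde L\leq p$, and the asymmetry of $p$ simultaneously yields the bound $|\tilde L(f)|\leq M\norm f$ and the positivity $\tilde L\geq 0$ (since $f\geq 0$ implies $p(-f)=0$, whence $\tilde L(-f)\leq 0$). This positive bounded functional is represented by some $\lambda\in ba_+$ satisfying $\lambda(F\set{t})=L(F\set{t})=m_t(F)$, i.e.\ $\lambda\in\M(m)$. The main obstacle is precisely this calibration step: the sublinear majorant has to be chosen asymmetric enough to force positivity of the extension while still dominating $L$, and $p(f)=M\sup f^+$ is exactly what accomplishes both.
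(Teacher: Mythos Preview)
Your proof is correct and follows the same strategy as the paper: the paper obtains (i)$\Leftrightarrow$(ii) by invoking an external result on coherent families of linear functionals, while you supply the underlying Hahn--Banach argument directly, and the remaining implications (ii)$\Rightarrow$(iii) via $\bar m_u(F)=\lambda(F\set{u})$ and (iii)$\Rightarrow$(i) via Lemma~\ref{lemma lattice} match the paper exactly. One minor remark: in the paper $\bar{\Bbb A}$ is a fixed enlargement given in advance, and the construction $\bar m_u(F)=\lambda(F\set{u})$ already works on any such $\bar{\Bbb A}$, so your specialization to $\AU_u=2^\Omega$ is unnecessary (though harmless).
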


\begin{proof}
For $t\in T$, let $\La_t=\{f\set{t}:f\in\B(\A_t)\}$ and define 
$\phi_t:\La_t\rightarrow\R$ implicitly as $\phi_t(f\set{t})=m_t(f)$. Then 
$\La_t$ is a linear subspace of $\B$ and $\phi_t$ a linear functional on it. 
Condition (\textit{i}) corresponds to requiring that the collection 
$(\phi_t:t\in T)$ is coherent in the sense of \cite[Corollary 1]{wins}: 
thus (\textit{i}) is equivalent to (\textit{ii}). For $\lambda\in\M(m)$, define 
$\bar m\in\prod_{u\in U} ba(\AU_u)$ implicitly as 
\begin{equation}
\label{extension}
\bar m_u(F)=\lambda(F\set{u})\qquad u\in U,\ F\in\AU_u
\end{equation}
It is clear that $\bar m$ is a positive, strong, finitely additive supermartingale 
and that $\restr{\bar m}{\Bbb A}=m$. The implication (\textit{iii})$\to$(\textit{i}) 
follows from Lemma \ref{lemma lattice}.
\end{proof}

A Dol\'{e}ans-Dade measure is usually defined to be countably additive 
relatively to $\mathscr P$ (see \cite{doleans-dade}, \cite{follmer} and 
\cite{metivier pellaumail}). We believe that this additional property is 
not really essential to obtain several interesting results, such as the 
Doob Meyer decomposition. The advantage of taking $\M(m)$ to be a subset 
of $ba$, implicit in Theorem \ref{theorem Doleans}, is that the existence 
of a measure so defined turns out to be a property independent of the given 
filtration\footnote{By a terminological \textit{curiosum}, a finitely additive 
supermartingale as in Theorem \ref{theorem Doleans}.(\textit{iii}) is called 
a weak supermartingale in \cite[definition 2.1, p. 503]{dozzi}.}. 
It should be mentioned that a strong finitely additive supermartingale need
not be of class $D_0$, if the index set is not closed with respect to union 
and intersection.

The next task is to establish a version of the Doob Meyer decomposition. A 
first step in this direction is made by remarking that, if $\lambda\in\M(m)$ 
and $H\subset\OI$, we may define $\lambda_H\in ba(\Omega)$ implicitly as 
\begin{equation}
\label{lambda_H}
\lambda_H(F)=\lambda((F\times I)\cap H)\qquad F\subset\Omega
\end{equation}
Then $(\lambda_{u^c}:u\in U)$ is an increasing family in $ba(\Omega)$; moreover
\begin{equation}
\label{ADM}
m_t=\restr{\lambda_{\OI}}{\A_t}-\restr{\lambda_{t^c}}{\A_t}\qquad t\in T
\end{equation}
i.e. $m$ decomposes into the difference of a finitely additive
martingale and a finitely additive increasing process, a result first obtained
by Armstrong \cite{armstrong} (see also \cite[corollary 1, p. 591]{JOTP}).

\section{Classical Supermartingales}

A particularly interesting special case is that of classical supermartingales that
we treat, in accordance with \cite{JOTP}, without the assumption of a given probability 
measure. In order to avoid additional notation we assume in what follows (and without 
loss of generality) that $\AU_u$ is a $\sigma$ algebra for each $u\in U$.

Let $m$ be a positive finitely additive supermartingale on $\Bbb A$ and define
\begin{equation} 
\label{uc}
\M^{uc}=\{\lambda\in ba_+:\restr{\lambda_{\OI}}{\F}\in ca(\F)\}
\quad\text{and}\quad \M^{uc}(m)=\M^{uc}\cap\M(m)
\end{equation}
The property $\M^{uc}(m)\neq\varnothing$ is actually necessary and sufficient 
for a finitely additive supermartingale indexed by $\R_+$ to admit a Doob Meyer 
decomposition, \cite[theorem 4, p. 597]{JOTP}. This conclusion extends to the 
case of a linearly ordered index set but needs to be strengthened considerably 
in the more general case. It suffices, however, to get a primitive version of 
this fundamental result. Let in fact $\lambda\in\M^{uc}(m)$ and fix $P\in\Prob(\F)$ 
such that $\restr{\lambda_{\OI}}{\F}\ll\restr{P}{\F}$ (in symbols $P\in\Prob(\F,\lambda)$). 
Then, letting $\bar M,\ \bar A_u,\ \bar X_u\in L(P)$ be the Radon Nikodym derivatives 
of $\restr{\lambda_{\OI}}{\F},\ \restr{\lambda_{u^c}}{\F}$ and $\restr{\lambda_u}{\F}$ 
with respect to $\restr{P}{\F}$ we get from (\ref{ADM}):
\begin{equation}
\label{DM raw}
\bar m_u(F)=P(\bar X_u\set{F})=P((\bar M-\bar A_u)\set{F})\qquad u\in U,\ F\in\AU_u
\end{equation}
where $\bar m$ is the extension of $m$ to $\bar{\Bbb A}$ mentioned in Theorem 
\ref{theorem Doleans}.(\textit{iii}). The crucial step is thus obtaining a version
of (\ref{DM raw}) in which $\bar A$ is predictable in some due sense.

In the classical theory, where $P\in\Prob(\F)$ is given, there is a strict relationship
between the property $\M^{uc}\neq\varnothing$ and the existence of a Dol\'{e}ans-Dade
measure which is countably additive in restriction to the predictable $\sigma$ algebra.
In fact ordinary properties of the predictable projection guarantee that if $\lambda\in ba$ 
that vanishes on $P$ negligible sets and is countably additive in restriction to $\Pred$ 
then it has an extension which is countably additive on $\F\otimes\mathcal B(\R_+)$ and 
thus on $\F$. Conversely, each $\lambda\in\M^{uc}$ is a Daniell integral in restriction 
to any vector sublattice $\La$ of $\B$ with the property that $\sup_{i\in I}f(i)$ is $\F$ 
measurable for each $f\in\La$ and that each sequence $\seqn{f}$ in $\La$ with $f_n\downarrow 0$ 
converges to $0$ uniformly in $i\in I$. With $I=\R_+$ a simple variant of Dini's theorem 
shows that processes with finite range and upperly semicontinuous paths possess this 
property, a fact used by Dellacherie and Meyer \cite[theorem 2, p. 184]{dellacherie meyer} 
to prove, after Mertens \cite{mertens}, a version of the Doob Meyer decomposition that 
does not require the usual conditions. Under the current assumptions, however, this last 
argument does not hold without introducing additional topological properties; on the 
other hand, the extension of the notion of predictable projection to our setting, that 
in Dozzi et al. \cite[A3, p. 516] {dozzi} is by assumption, is not obvious.

For given $P\in\Prob(\F)$ and $d\in\DU$ define the following elementary process (up 
to a $P$ equivalence class)
\begin{equation}
\label{Pd}
\Pd_P(b)=\sum_{u\cap v^c\in d}\lcond{P}{\inf_{i\in u\cap v^c}b(i)}{\AU_u}\set{u\cap v^c}
\qquad b\in\B
\end{equation}
where the conditional expectation appearing in (\ref{Pd}) is defined as in 
\cite[Theorem 1, p. 588]{JOTP}. Of course if $b\in\EU$ and $d\in\DU$ is large enough
then $\Pd_P(b)=b$.

\begin{lemma}
\label{lemma predictable}
If $\lambda\in\M^{uc}$ and $P\in\Prob(\F,\lambda)$ then there is $\lP\in ba_+$ such that 
$\lP_{\OI}$ vanishes on $P$ null sets and\footnote{Where $\LIM$ denotes the Banach limit.}
\begin{equation}
\label{compensator}
\lP(fg)=\LIM_{d\in\DU}\lP\left(\Pd_P(f)g\right)\qquad f\in\EU^*,\ g\in\EU
\end{equation}
If $\lrestr{\lP}{\bar\Pred}\in ca(\bar\Pred)$ then there exists 
$\bar\Pred_P:\B\to L^\infty(\restr{\lambda}{\bar\Pred})$ such that
\begin{equation}
\label{exact compensator}
\bar\Pred_P(g)=g\quad\text{and}\quad\lP(f)=\lP(\bar\Pred_P(f))\qquad f\in\EU^*,
\ g\in\EU
\end{equation}
\end{lemma}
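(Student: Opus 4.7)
The plan is to construct $\lP$ through a Banach-limit argument combined with a Hahn-Banach extension, and then derive the second assertion from a Radon-Nikodym step. The natural candidate is $\lP_0(h) := \LIM_{d \in \DU} \lambda(\Pd_P(h))$, which is positive since $\lambda \in ba_+$ and the conditional expectations defining $\Pd_P$ preserve positivity; for $F \in \F$ with $P(F) = 0$ one has $P(\set{F} \mid \AU_u) = 0$ in $L^1(P)$, so $\Pd_P(\set{F \times I})$ is $P$-a.s.\ zero, and $\restr{\lambda_{\OI}}{\F} \ll \restr{P}{\F}$ then forces $\lP_0(F \times I) = 0$. The subtle point is that $\Pd_P$ is only superadditive in its argument (because of the infimum), so $\lP_0$ is not automatically finitely additive. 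To produce $\lP \in ba_+$ I would apply Hahn-Banach: the functional $g \mapsto \lambda(g)$ is linear and positive on the subspace $\EU \subset \B$ (where $\Pd_P$ acts trivially for sufficiently refined $d$, so the two functionals agree there) and is dominated by a positively homogeneous majorant derived from $\lP_0$, so it extends to a positive linear $\lP \in ba_+$ on $\B$ that inherits vanishing on $P$-null fibres.

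The compensator identity \eqref{compensator} then follows from a direct computation at the level of the Banach limit. For $f \in \EU^*$ and $g = \sum_n g_n \set{u_n \cap v_n^c} \in \EU$ in canonical form, and $d \in \DU$ refining the partition of $g$, each piece $u' \cap v'^c \in d$ is contained in some $u_n \cap v_n^c$; since $g_n$ is $\AU_{u_n}$-measurable it commutes with $P(\cdot \mid \AU_{u'})$ once the refinement satisfies $u_n \subset u'$, while splitting $g_n = g_n^+ - g_n^-$ handles the interaction with the infimum via $\inf_i(\alpha f)(i) = \alpha \inf_i f(i)$ for $\alpha \geq 0$ and the dual identity for $\alpha \leq 0$. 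For $d$ sufficiently fine this yields $\Pd_P(fg) = \Pd_P(f) \cdot g$, so their $\lambda$-integrals agree eventually and the two Banach limits coincide by invariance of $\LIM$ under eventual equality.

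For the second part, countable additivity of $\lrestr{\lP}{\bar\Pred}$ enables a Radon-Nikodym argument. For each $f \in \B$ the linear functional $g \mapsto \lP(fg)$ on bounded $\bar\Pred$-measurable $g$ is absolutely continuous with respect to $\lrestr{\lP}{\bar\Pred}$, producing a $\bar\Pred$-measurable Radon-Nikodym derivative which I call $\bar\Pred_P(f)$; interpreting it in $L^\infty(\restr{\lambda}{\bar\Pred})$ requires a further identification that I would extract from the construction of $\lP$ in the first paragraph (namely that $\lP$ inherits null sets from $\lambda$ on the predictable $\sigma$-algebra). Specialising to $g \equiv 1$ gives $\lP(f) = \lP(\bar\Pred_P(f))$, and for $g \in \EU \subset \bar\Pred$ uniqueness of the Radon-Nikodym derivative forces $\bar\Pred_P(g) = g$.

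The main obstacle will be the construction step: the nonlinearity of $\Pd_P$ prevents a direct definition of $\lP$ as $\lP_0$, and the Hahn-Banach extension has to be carried out so that the resulting finitely additive $\lP$ still interacts with $\Pd_P$ in the way demanded by \eqref{compensator}. Securing simultaneous compatibility of finite additivity, positivity, $P$-absolute continuity, and the predictable-projection relation is the crux; once these are in place, the remaining steps are relatively routine manipulations of Banach limits and Radon-Nikodym derivatives.
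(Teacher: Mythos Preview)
Your strategy matches the paper's almost exactly: define $\gamma(h)=\LIM_{d\in\DU}\lambda(\Pd_P(h))$, observe it is concave but not additive, produce $\lP\in ba_+$ via a Hahn--Banach argument, and obtain the second assertion from a conditional expectation $\bar\Pred_P(f)=\cond{\lP}{f}{\bar\Pred}$. The one point your sketch does not close is precisely the one you flag at the end: you extend $\lambda$ from $\EU$, but (\ref{compensator}) requires control of $\lP(fg)$ for $fg\in\EU^*$, and a Hahn--Banach extension determined only on $\EU$ gives you none.

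The missing observation, which also renders the sign discussion in your second paragraph superfluous, is that $\gamma$ is already \emph{linear} on the larger subspace $\EU^*$. Once $d$ refines the representation of $f\in\EU^*$, the restriction of $f$ to each block $u\cap v^c\in d$ depends on $\omega$ alone, so the infimum in the definition of $\Pd_P$ is vacuous there and $\Pd_P$ acts as a genuine (linear) conditional-expectation operator on $\EU^*$. The paper's Hahn--Banach step (via \cite[lemma 2]{wins}) is arranged to give $\lP\geq\gamma$ on all of $\B$ together with $\lP=0$ on the $P$-null space; combined with linearity of $\gamma$ on $\EU^*$ this forces $\lP=\gamma$ there (apply the inequality to both $f$ and $-f$). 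Since $fg\in\EU^*$ and $\Pd_P(f)g\in\EU$, one gets $\lP(fg)=\gamma(fg)=\LIM_d\lambda(\Pd_P(fg))=\LIM_d\lambda(\Pd_P(f)g)=\LIM_d\lP(\Pd_P(f)g)$, and (\ref{compensator}) follows. In short, carry out the Hahn--Banach step with $\gamma$ itself as the concave minorant rather than with an unspecified majorant, and use the linearity of $\gamma$ on $\EU^*$ to pin down $\lP$ beyond $\EU$.
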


\begin{proof}
Consider the functional $\gamma:\B\to\R$ defined as 
$\gamma(f)=\LIM_{d\in\D}\lambda\left(\Pd_P(f)\right)$ for any $f\in\B$. Then, 
$\gamma$ is a concave integral in the sense of \cite[definition 1]{wins}, it 
is linear on $\EU^*$ and such that $\gamma=\lambda$ in restriction to $\EU$; 
moreover, $\gamma(b)=0$ for all $b$ in the linear space 
$\mathscr L=\{g\in\B:P(\abs{g}>\eta)=0\ \text{for all}\ \eta>0\}$. 
Given that $\mathscr L$ is a linear space, by \cite[lemma 2, p. 4]{wins} there exists 
$\lP\in ba_+$ such that 
$$
\lP(g)=0\quad\text{and}\quad\lP(f)\geq\gamma(f)\qquad g\in\mathscr L,\ f\in\B
$$
Consequently if $g\in\EU$ and $f\in\EU^*$, then $fg\in\EU^*$ and thus
$$
\lP(fg)=\gamma(fg)=\LIM_{d\in\DU}\lambda(\Pd_P(fg))=
\LIM_{d\in\DU}\lambda(\Pd_P(f)g)=\LIM_{d\in\DU}\lP(\Pd_P(f)g)
$$
The last claim follows by taking $\bar\Pred_P(f)=\cond{\lP}{f}{\bar\Pred}$.
\end{proof}

When $\lambda\in\M^{uc}$ and $P\in\Prob(\F,\lambda)$ then $\lP\in ba_+$, defined 
as in Lemma \ref{lemma predictable}, will be referred to as the $P$ compensator of 
$\lambda$, disregarding non uniqueness. Remark that if $\lambda\in\M^{uc}(m)$
for some finitely additive supermartingale $m$, then its $P$ compensator
$\lP$ is itself a Dol\'{e}ans-Dade measure for $m$, i.e. $\lP\in\M(m)$. It is,
however, not possible to conclude that $\lP\in\M^{uc}(m)$ in the general case, i.e. that
\begin{equation}
\label{class D*}
\M^*(m)=\left\{\lP\in\M^{uc}(m):\lambda\in\M^{uc}(m)\right\}\neq\varnothing
\end{equation}
We shall refer to (\ref{class D*}) by saying that $m$ is of class $D_*$. 

When $P\in\Prob(\F)$, $(\bar A_u:u\in U)$ is a $P$ increasing process if 
$P(0=\bar A_{\OI}\leq\bar A_u\leq\bar A_v)=1$ for all $u,v\in U$ with $u\leq v$. 
$(\bar B_u:u\in U)$ is then a modification of $\bar A$ if $P(\bar A_u=\bar B_u)=1$ 
for all $u\in U$.

\begin{theorem}
\label{theorem DM}
Let $\bar m$ be a finitely additive supermartingale of class $D_*$ on $\bar{\Bbb A}$. 
Then for some $P\in\Prob(\F)$ there exists one and only one (up to modification) 
way of writing
\begin{equation}
\label{DM}
\bar m_u(F)=P((M-\bar A_u)\set{F})\qquad u\in U,\ F\in\AU_u
\end{equation}
where $M\in L(P)$ and $\bar A$ is an increasing process, adapted to $\bar{\Bbb A}$ 
and such that
\begin{equation}
\label{natural}
P\int fd\bar A=\LIM_{d\in\DU}P\int_{u^c}\bar\Pred^d_P(f)d\bar A\qquad f\in\EU^*
\end{equation}
\end{theorem}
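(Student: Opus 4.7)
The plan is to extract $M$ and $\bar A$ as Radon--Nikodym components of a compensator measure supplied by the class $D_*$ hypothesis, and to derive the remaining properties of the decomposition by translating statements between $\bar A$ and $\lP$.

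By (\ref{class D*}) I fix $\lambda \in \M^{uc}(\bar m)$ and $P \in \Prob(\F, \lambda)$ with $\lP \in \M^{uc}(\bar m)$. Since $\restr{\lP_{\OI}}{\F}$ is then countably additive and dominated by $\restr{P}{\F}$, let $M \in L^1(P)$ be its Radon--Nikodym derivative with respect to $\restr{P}{\F}$. For each $u \in U$, the positive set function $\lP_{u^c}$ defined by (\ref{lambda_H}) satisfies $\lP_{u^c} \le \lP_{\OI}$, so its restriction to $\F$ is countably additive and $\ll P$; let $\bar A_u$ be the $\AU_u$-measurable version of its density, obtained from the $\F$-measurable Radon--Nikodym derivative by conditional expectation given $\AU_u$. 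Adaptedness is then automatic, $\bar A_{\OI} = 0$ follows from $\OI^c = \varnothing$, and (\ref{DM}) is immediate from (\ref{ADM}) applied to $\lP$: for $F \in \AU_u$,
\begin{equation*}
\bar m_u(F) = \lP_{\OI}(F) - \lP_{u^c}(F) = P((M - \bar A_u)\set{F}).
\end{equation*}

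The naturality identity (\ref{natural}) is then the $\bar A$-side reading of (\ref{compensator}) from Lemma \ref{lemma predictable}: under the correspondence $P\int_{u^c}(\cdot)\,d\bar A = \lP((\cdot)\set{u^c})$, which restates the defining relation $P(\bar A_u\set{F}) = \lP_{u^c}(F)$ on $\AU_u$, the identity $\lP(fg) = \LIM_{d \in \DU} \lP(\Pd_P(f)g)$ valid for $f \in \EU^*$ and $g \in \EU$ passes directly to (\ref{natural}) after isolating the cell contributions of $d$. Uniqueness up to modification is then automatic: if $\bar A$ and $\bar A'$ both realize (\ref{DM}), then $P((\bar A_u - \bar A'_u)\set{F}) = 0$ for every $F \in \AU_u$ forces $\bar A_u = \bar A'_u$ $P$-a.s.\ by $\AU_u$-measurability of both sides.

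The principal obstacle is the pointwise monotonicity of $\bar A$ compatibly with $\AU_u$-adaptedness. Pathwise ordering of the raw $\F$-measurable densities $\tilde A_u$ follows from $u \le v \Rightarrow u^c \subset v^c \Rightarrow \lP_{u^c} \le \lP_{v^c}$, but conditioning on $\AU_u$ independently at each $u$ only yields the submartingale inequality $\bar A_u \le \cond{P}{\bar A_v}{\AU_u}$. To recover the genuine pointwise increase one must pick a coherent family of modifications --- either by working with an exact predictable projection (as in the second part of Lemma \ref{lemma predictable}, when $\restr{\lP}{\bar\Pred} \in ca(\bar\Pred)$) or by exploiting the $\LIM$ form of the naturality relation to select a version of $\bar A$ that is simultaneously increasing in $u$ and adapted to $\bar{\Bbb A}$.
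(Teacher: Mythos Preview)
Your overall architecture matches the paper's: pick $\lambda\in\M^{uc}(\bar m)$, $P\in\Prob(\F,\lambda)$, pass to the $P$-compensator $\lP\in\M^{uc}(\bar m)$, and read off $M$ and $\bar A$ as Radon--Nikodym derivatives. But two of your steps are genuine gaps, not just sketches.

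\textbf{Uniqueness.} Your argument assumes the same $M$ appears in both decompositions, so that (\ref{DM}) alone yields $P((\bar A_u-\bar A'_u)\set{F})=0$. That is not what the theorem asserts: a competing decomposition may use a different $M'$, and then (\ref{DM}) only pins down $\condP{M}{\AU_u}-\bar A_u$, which leaves room for nontrivial rearrangements (exactly as in the discrete Doob decomposition, where adaptedness and monotonicity alone do not force uniqueness). The paper's proof uses (\ref{natural}) for \emph{both} candidate increasing processes: since $P\int\Pd_P(F\set{u})\,d\bar A=-P\int\Pd_P(F\set{u})\,d\bar X$ depends only on $\bar X$, any two increasing processes satisfying (\ref{natural}) must agree on $P(\bar A_u\set{F})$ for $F\in\AU_u$. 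You need the naturality condition here; (\ref{DM}) by itself is not enough.

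\textbf{Adapted monotonicity.} You correctly identify this as the crux and correctly note that independent conditioning only gives the submartingale inequality $\bar A_u\le\condP{\bar A_v}{\AU_u}$. What you are missing is precisely the computation the paper carries out: one shows, using (\ref{compensator}), that $\lP_{u^c}(b)=0$ whenever $\condP{b}{\AU_u}=0$. Concretely, for $d\in\DU$ refining $\{u^c\}$ one checks $\Pd_P(b\set{u^c})=0$ because on each cell $u_n\cap v_n^c\subset u^c$ the inner conditional expectation factors through $\AU_u$. This says exactly that $\restr{\lP_{u^c}}{\F}$ already ``lives on'' $\AU_u$, so the conditioned density $\bar A_u=\condP{\tilde A_u}{\AU_u}$ is a $P$-modification of the raw density $\tilde A_u$, and monotonicity is inherited. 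Your last paragraph gestures at ``the $\LIM$ form of the naturality relation'' but never performs this step; without it the adapted increasing process has not been produced.
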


\begin{proof} 
Let $\lambda\in\M^{uc}(m)$, $P\in\Prob(\F,\lambda)$ and $\lambda^P\in\M^*(m)$. 
Define $M$ and $\bar A'_u$ to be  the Radon Nikodym derivatives with respect to 
$\restr{P}{\F}$ of $\restr{\lambda^P_{\OI}}{\F}$  and $\restr{\lambda_{u^c}^P}{\F}$. 
(\ref{DM}) is thus a version of (\ref{DM raw}). Clearly ,
\begin{equation}
P\int fd\bar A'=P\sum_{u\cap v^c\in d}f_u(\bar A'_v-\bar A'_u)=\lambda^P(f)\quad\text{for all}\quad
f=\sum_{u\cap v^c\in d}f_u\set{u\cap v^c}\in\EU^*
\end{equation}
(\ref{compensator}) then implies that (\ref{natural}) holds for $\bar A'$ and
its modifications, among which there exists one which is adapted. In fact, if 
$b\in\B(\F)$, $\bar u\in U$ and $\condP{b}{\AU_{\bar u}}=0$, then, choosing 
$d\in\DU$ finer than $\{\bar u^c\}$
$$
\Pd_P(b\set{\bar u^c})=\sum_{\{u\cap v^c\in d:u\cap v^c\subset\bar u^c\}}\lcondP{b}{\AU_{u}}\set{u\cap v^c}
=\sum_{\{u\cap v^c\in d:u\cap v^c\subset\bar u^c\}}\lcondP{b\set{\{u\leq\bar u\}}}{\AU_{u}}\set{u\cap v^c}=0
$$
a conclusion following from (\ref{filtration}) and the fact that 
$\lcondP{b\set{\{u\leq\bar u\}}}{\AU_{u}}=\lcondP{\condP{b}{\AU_{\bar u}}\set{\{u\leq\bar u\}}}{\AU_{u}}$. 
We conclude that $\lambda_{\bar u^c}^P(b)=\lambda^P(b\set{\bar u^c})=0$. Let 
$\bar A_u=\condP{\bar A'_u}{\AU_u}$ and $F\in\F$. Then,
\begin{equation}
P(\bar A'_u\set{F})=\lambda_{u^c}^P(F)=\lambda_{u^c}^P\left(\condP{F}{\AU_u}\right)
        =P(\bar A'_u\condP{F}{\AU_u})
        =P(\bar A_u\set{F})
\end{equation}
$\bar A$ is thus an adapted modification of $\bar A'$ and therefore itself an increasing process 
meeting (\ref{natural}). Suppose that $\condP{N}{\AU_u}-\bar B_u$ is another decomposition such 
as (\ref{DM}). Then if $F\in\AU_u$ and $d\in\DU$
$$
P\int\Pd_P(F\set{u})d\bar A=-P\int\Pd_P(F\set{u})d\bar X=P\int\Pd_P(F\set{u})d\bar B
$$
and, if both $\bar A$ and $\bar B$ meet (\ref{natural}), $P(\bar A_u\set{F})=P(\bar B_u\set{F})$.
\end{proof}

Remark that Theorem \ref{theorem DM} is actually weaker than the classical Doob Meyer 
decomposition first of all because the class $D_*$ property is only a sufficient condition but 
need not be necessary. Second, we established uniqueness only up to a modification rather 
than indistinguishability, a circumstance which is almost unavoidable in the absence of 
separability of the index set and of right continuity of the process.

It should also be remarked that it may not be possible to establish the above decomposition 
for the original index set $T$ because the increasing process $\bar A$ may not be adapted 
to the \textit{original} filtration $\Bbb A$. On the other hand the class $D_*$ property is a 
global property and is thus preserved under any enlargement of the filtration. It appears 
therefore that the decomposition of Doob Meyer depends more on the structure of the index set 
than on the filtration. 

A less general decomposition is based on the following, further uniform integrability 
condition for processes.
\begin{definition}
\label{definition uiv}
A stochastic process $\bar Y$ on $\bar{\Bbb A}$ is said to be of $P$ uniformly 
integrable variation for some $P\in\Prob(\F)$ if the collection
$\mathscr V(\bar Y)=\left\{\sum_{u\cap v^c\in d}\dabs{\bar Y_u-\condP{\bar Y_v}{\AU_u}}:d\in\DU\right\}$
is unformly $P$ integrable.
\end{definition}

\begin{corollary}
\label{corollary DM uiv}
Let $P\in\Prob(\F)$ and $\bar X$ be a positive, strong, $P$ supermartingale on $\bar{\Bbb A}$.
Then $\bar X$ is of $P$ uniformly integrable variation if and only if it decomposes as in
(\ref{DM}) with the increasing process $\bar A$ being, in addition, of $P$ uniformly integrable 
variation.
\end{corollary}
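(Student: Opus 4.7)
The plan is to verify the class $D_*$ hypothesis of Theorem \ref{theorem DM} for the associated finitely additive supermartingale $\bar m_u(F) = P(\bar X_u \set{F})$, apply that theorem, and compare the two variations through a direct algebraic identity.

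The reverse implication rests on the following observation. If (\ref{DM}) holds with $\bar A$ adapted and increasing, then $\bar X_u = \condP{M}{\AU_u} - \bar A_u$ $P$-almost surely, so for $u \leq v$ the tower property gives
\[
\bar X_u - \condP{\bar X_v}{\AU_u} = \condP{\bar A_v}{\AU_u} - \bar A_u,
\]
a non-negative quantity (since $\bar A_u = \condP{\bar A_u}{\AU_u} \leq \condP{\bar A_v}{\AU_u}$) that coincides with $|\bar A_u - \condP{\bar A_v}{\AU_u}|$. Summing over $d \in \DU$ yields $\mathscr V(\bar X) = \mathscr V(\bar A)$, so $P$-uniform integrability of one family is equivalent to that of the other.

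For the forward direction assume $\bar X$ is of $P$-uniformly integrable variation, and put $A_d = \sum_{u \cap v^c \in d}(\bar X_u - \condP{\bar X_v}{\AU_u})$. The family $\{A_d\}_{d \in \DU}$ is non-negative, monotone increasing in $d$, and uniformly $P$-integrable, so $\sup_d P(A_d) = \|\bar m\|_V < \infty$; by Lemma \ref{lemma lattice} and Theorem \ref{theorem Doleans}, $\bar m$ is of class $D_0$. To upgrade to class $D_*$ I construct $\lambda \in \M^{uc}(\bar m)$ explicitly: on an elementary process written in canonical form $f = \sum_{u \cap v^c \in d} f_u \set{u \cap v^c}$ set
\[
\lambda_d(f) = P\left(\sum_{u \cap v^c \in d} f_u\bigl(\bar X_u - \condP{\bar X_v}{\AU_u}\bigr)\right),
\]
extend $\lambda_d$ to a positive element of $ba$ by Hahn--Banach, and put $\lambda = \LIM_{d \in \DU}\lambda_d$. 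A telescoping check on $F\set{t}$ gives $\lambda \in \M(\bar m)$. Countable additivity of $\restr{\lambda_{\OI}}{\F}$ follows because, for any $F_n \downarrow \varnothing$ in $\F$, $\lambda_{\OI}(F_n) = \LIM_d P(\set{F_n} A_d)$ and uniform $P$-integrability gives $\lim_n \sup_d P(\set{F_n} A_d) = 0$ (split each $A_d$ at a large threshold $K$ and use $P(F_n) \to 0$). Hence $\bar m$ is of class $D_*$; Theorem \ref{theorem DM} delivers the decomposition (\ref{DM}) with $\bar A$ adapted and increasing, and the identity of the second paragraph shows $\bar A$ itself is of $P$-uniformly integrable variation.

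The main obstacle is precisely the countable additivity of $\restr{\lambda_{\OI}}{\F}$: mere $L^1$-boundedness of $\{A_d\}$ only yields class $D_0$ and a finitely additive projection, and it is the uniform control on the tails of the variation net that permits the interchange between the Banach limit over $\DU$ and the sequential limit $F_n \downarrow \varnothing$.
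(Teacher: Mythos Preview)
Your reverse implication via $\mathscr V(\bar X)=\mathscr V(\bar A)$ matches the paper exactly. The forward direction, however, has a genuine gap at the step ``Hence $\bar m$ is of class $D_*$.'' Recall that class $D_*$ means $\M^*(\bar m)\neq\varnothing$, i.e.\ the $P$\nobreakdash-compensator $\lambda^P$ of some $\lambda\in\M^{uc}(\bar m)$ again lies in $\M^{uc}(\bar m)$; merely exhibiting $\lambda\in\M^{uc}(\bar m)$ is not enough. Your construction does not verify the compensator property (\ref{compensator}) for the $\lambda$ you build, nor does it argue that $\lambda^P_{\OI}$ is countably additive. A related difficulty is the claimed identity $\lambda_{\OI}(F_n)=\LIM_d P(\set{F_n}A_d)$: since $F_n\in\F$ need not belong to any $\AU_u$, the function $\set{F_n}\set{\OI}$ is not in $\EU$, so its value under $\lambda_d$ depends on the (non-unique) Hahn--Banach extension and the formula is unjustified. (A minor point: the net $\{A_d\}$ is not monotone increasing pathwise, only in $P$-expectation.)

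The paper sidesteps both issues by computing the compensator directly. Starting from any $\lambda\in\M^{uc}(\bar m)$, it evaluates $\lambda(\bar\Pred^d_P(F))$: because $\bar\Pred^d_P(F)\in\EU$, this value is determined by $\bar m$ alone (no Hahn--Banach ambiguity) and equals $P(\set{F}A_d)$ via the tower property. Uniform integrability of $\mathscr V(\bar X)$ then gives $\restr{\lambda^P_{\OI}}{\F}\ll\restr{P}{\F}$ immediately, which is exactly the class $D_*$ condition. Your uniform-integrability estimate is the right idea, but it should be applied to $\lambda^P_{\OI}$ via $\bar\Pred^d_P$ rather than to an ad hoc extension of $\lambda$.
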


\begin{proof}
If (\ref{DM}) holds then $\mathscr V(\bar X)=\mathscr V(\bar A)$ so that $\bar X$ is of $P$
uniformly integrable variation if and only if so is $\bar A$. Assume then 
that $\bar X$ is of $P$ uniformly integrable variation and let $\bar m$ be the
finitely additive supermartingale associated to $\bar X$. Choose $\lambda\in\M^{uc}(\bar m)$ 
and let $\lP$ be its $P$ compensator. If $F\in\F$ then 
$$
\lambda(\bar\Pred^d_P(F))=\sum_{u\cap v^c\in d}P(\condP{F}{\AU_u}(X_u-X_v))
= P\left(\set{F}\sum_{u\cap v^c\in d}X_u-\condP{X_v}{\AU_u}\right)
\leq\sup_{V\in\DU}P(V\set{F})
$$
Thus, $\lrestr{\lP_{\OI}}{\F}\ll \lrestr{P}{\F}$ and $\bar m$ is thus of class $D_*$: (\ref{DM}) 
follows from Theorem \ref{theorem DM}. 
\end{proof}

The characterisation provided in Corollary \ref{corollary DM uiv} is less
satisfactory then it may appear at first sight. In fact the property involved
is significantly stronger than what considered in the classical setting.
In fact even increasing processes may fail to be of uniformly integrable
variation.

The special case of a linearly ordered index set is eventually considered.

\begin{corollary}
\label{corollary linear}
Let $m$ be a finitely additive supermartingale on $\Bbb A$ and assume that $U$ is linearly
ordered. Then the following are equivalent:
\begin{enumerate}
\item[(\textit{i})] $\M^{uc}(m)\neq\varnothing$;
\item[(\textit{ii})] $m$ is of class $D_*$;
\item[(\textit{iii})] there exists $P\in\Prob(\F)$ and a strong $P$ supermartingale $\bar X$ 
of uniformly integrable variation that meet (\ref{DM raw}).
\end{enumerate}
\end{corollary}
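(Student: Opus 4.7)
The plan is to close the cycle $\mathrm{(ii)}\Rightarrow\mathrm{(i)}\Rightarrow\mathrm{(iii)}\Rightarrow\mathrm{(ii)}$. The implication $\mathrm{(ii)}\Rightarrow\mathrm{(i)}$ is immediate from the inclusion $\M^*(m)\subseteq\M^{uc}(m)$. The implication $\mathrm{(iii)}\Rightarrow\mathrm{(ii)}$ is essentially already carried out in the proof of Corollary \ref{corollary DM uiv}: the uniformly integrable variation of $\bar X$ is used there to show that, for any $\lambda\in\M^{uc}(\bar m)$, the $P$-compensator $\lP$ remains absolutely continuous with respect to $\restr{P}{\F}$, so that $\lP\in\M^{uc}(\bar m)$ and $\bar m$ is of class $D_*$.

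For the central step $\mathrm{(i)}\Rightarrow\mathrm{(iii)}$, I fix $\lambda\in\M^{uc}(m)$ and $P\in\Prob(\F,\lambda)$, and let $\bar M$ and $\bar A'_u$ be the Radon--Nikodym derivatives of $\restr{\lambda_{\OI}}{\F}$ and $\restr{\lambda_{u^c}}{\F}$ with respect to $\restr{P}{\F}$. Because $u\leq v$ is reverse set inclusion, $u^c\subseteq v^c$ and hence $\lambda_{u^c}\leq\lambda_{v^c}$ on $\F$, so $\bar A'$ is a pointwise non-decreasing process dominated by $\bar M\in L^1(P)$. The process $\bar X_u=\condP{\bar M-\bar A'_u}{\AU_u}$ is then a positive, strong $P$-supermartingale on $\bar{\Bbb A}$ realizing (\ref{DM raw}) and dominated by the uniformly integrable martingale $\condP{\bar M}{\AU_u}$, hence of class $D$.

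The step I expect to be the main obstacle is verifying that $\bar X$ is of $P$-uniformly integrable variation. For any $d\in\DU$ the linear ordering of $U$ allows one to relabel the disjoint family as a chain $u_1\leq v_1\leq u_2\leq\cdots\leq u_N\leq v_N$ (an easy check using that $U$ is totally ordered and each $v_n,v_{n+1}\subseteq u_{n+1}$ when $u_{n+1}\leq v_n$ forces $u_{n+1}=v_n$), whence
\[
V_d=\sum_{n=1}^N\bigl(\bar X_{u_n}-\condP{\bar X_{v_n}}{\AU_{u_n}}\bigr)=\sum_{n=1}^N\condP{Y_n}{\AU_{u_n}},\qquad Y_n:=\bar A'_{v_n}-\bar A'_{u_n}\geq0,
\]
with the pointwise bound $\sum_n Y_n\leq\bar M$. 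The delicate point is that the conditioning $\sigma$-algebras $\AU_{u_n}$ vary with $n$, so the sum cannot be pulled inside a single conditional expectation.

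To overcome this, I would test $V_d$ against $\set{\{V_d>K\}}$: transferring each $\condP{\cdot}{\AU_{u_n}}$ onto the indicator rewrites $E\bigl[V_d\set{\{V_d>K\}}\bigr]$ as $\sum_n E\bigl[Y_n\condP{\set{\{V_d>K\}}}{\AU_{u_n}}\bigr]$, which is bounded by $E[\bar M\cdot M_d^*]$, where $M_d^*$ is the maximal function of the $[0,1]$-valued martingale $u\mapsto\condP{\set{\{V_d>K\}}}{\AU_u}$. Doob's maximal inequality gives $E[(M_d^*)^2]\leq4P(V_d>K)\leq 4E[\bar M]/K$, and a Cauchy--Schwarz estimate together with a truncation of $\bar M$ yields $\sup_{d\in\DU}E\bigl[V_d\set{\{V_d>K\}}\bigr]\to0$ as $K\to\infty$, which is the desired uniform integrability.
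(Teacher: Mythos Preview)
Your argument is correct and rests on the same pivotal observation as the paper: when $U$ is linearly ordered, each $d\in\DU$ can be relabelled so that the $\sigma$-algebras $\AU_{u_1}\subset\cdots\subset\AU_{u_N}$ form a finite filtration, and Doob's maximal inequality for the martingale $n\mapsto\condP{\set{F}}{\AU_{u_n}}$ controls $V_d$ uniformly in $d$. The routes differ in two respects.

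First, the cycle is traversed in the opposite direction. The paper proves $\textit{(i)}\Rightarrow\textit{(ii)}\Rightarrow\textit{(iii)}\Rightarrow\textit{(i)}$: it establishes the estimate
\[
\lambda\bigl(\Pd_P(F)\bigr)\leq\eta\norm{\lambda}+\lambda_{\OI}\bigl(M_d^*(F)>\eta\bigr),\qquad
M_d^*(F)=\max_{1\leq n\leq N}\condP{\set{F}}{\AU_{u_n}},
\]
and combines the weak-type $(1,1)$ maximal inequality with the absolute continuity $\restr{\lambda_{\OI}}{\F}\ll\restr{P}{\F}$ to get $\restr{\lP_{\OI}}{\F}\ll\restr{P}{\F}$, i.e.\ class $D_*$. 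It then reuses this same estimate, via the identity $P(V_d\set{F})=\lambda(\Pd_P(F))$, to obtain uniform integrability of $\mathscr V(\bar X)$. Your $\textit{(i)}\Rightarrow\textit{(iii)}$ works directly on the process side with the $L^2$ maximal inequality, Cauchy--Schwarz, and a truncation of $\bar M$; this is slightly longer but entirely self-contained and avoids passing through the compensator.

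Second, your $\textit{(iii)}\Rightarrow\textit{(ii)}$ delegates to Corollary~\ref{corollary DM uiv}, whose proof opens with ``choose $\lambda\in\M^{uc}(\bar m)$''. Strictly speaking that step is not justified by hypothesis \textit{(iii)} alone; the paper handles this by proving $\textit{(iii)}\Rightarrow\textit{(i)}$ directly, constructing a member of $\M^{uc}(\bar m)$ from $\lambda\in\M(\bar m)$ via the $\LIM$-extension. Since you are invoking the corollary as a black box this is acceptable, but be aware that the existence of an element of $\M^{uc}(\bar m)$ is precisely the content of \textit{(i)} and is where the work lies.

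One small inaccuracy: your parenthetical asserting $v_n\leq u_{n+1}$ is not needed and not quite correct as stated. All that is used is $u_1\leq\cdots\leq u_N$ (so that Doob's inequality applies) together with $\sum_n Y_n\leq\bar M$, and the latter follows simply from the disjointness of the sets $u_n\cap v_n^c$ and the identity $P(Y_n\set{F})=\lambda\bigl((F\times I)\cap u_n\cap v_n^c\bigr)$.
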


\begin{proof}
If $U$ is linearly ordered then each $d\in\DU$ may be taken to be of the form 
$\{u_n\cap v_n^c:u_n\leq u_{n+1}:n=1,\ldots,N\}$. Assume (\textit{i}) and choose 
$\lambda\in\M^{uc}$ and $P\in\Prob(\F,\lambda)$. If $F\in\F$ and $d\in\DU$ then, 
letting $M^*_d(F)=\sup_{\{1\leq n\leq N\}}\condP{F}{\AU_{u_n}}$ we have
\begin{equation}
\label{maximal}
\lambda(\Pd_P(F))\leq\eta\norm{\lambda}+\lambda(\Pd_P(F)>\eta)\leq\eta\norm{\lambda}
+\lambda_{\OI}(M^*_d(F)>\eta)
\end{equation}
Choose, by absolute continuity, $\delta$ such that $P(F)<\delta$ implies $\lambda_{\OI}(F)<\eta$.
By Doob maximal inequality if $P(F)<\eta\delta/3$ then $P(M^*_d(F)>\eta)\leq\delta$
and thus $\lambda(\Pd_P(F))\leq\eta(\norm{\lambda}+1)$. But then 
$\lambda_{\OI}^P(F)\leq\eta(\norm{\lambda}+1)$ and thus $\restr{\lambda_{\OI}^P}{\F}\ll\restr{P}{\F}$,
proving the implication (\textit{i})$\to$(\textit{ii}).
Assume (\textit{ii}) and consider $\lambda\in\M^{uc}(m)$, $P\in\Prob(\F,\lambda)$ and 
$\lambda^P\in\M^*(m)$. Let $\bar X$ be as in (\textit{iii}) and thus a strong $P$ supermartingale. 
Then,
\begin{equation}
\label{var to l}
P\left(F\sum_{u\cap v^c\in d}X_u-\condP{X_v}{\AU_u}\right)=
P\sum_{u\cap v^c\in d}\condP{F}{\AU_u}(X_u-X_v)=\lambda(\Pd_P(F))
\end{equation}
which implies, together with (\ref{maximal}), that $\bar X$ is of uniformly integrable variation.
Let $P$ and $\bar X$ be as in (\textit{iii}) and let $\bar m$ be the 
generated finitely additive supermartingale on $\bar{\Bbb A}$. Given that $\bar m$ 
is strong it is of class $D_0$, by Theorem \ref{theorem Doleans}. The restriction 
$\restr{\lambda}{\bar\Pred}$ of $\lambda\in\M(\bar m)$ to $\bar\Pred$ admits an 
extension to $\EU^*$ (still denoted by $\lambda$), defined by letting:
$$
\lambda(f)=\LIM_{d\in\DU}\lambda(\bar\Pred^d_P(f))
$$
Given that $\bar X$ is of uniformly integrable variation and in view of (\ref{var to l})
we conclude that $\lambda\in\M^{uc}(\bar m)$.
\end{proof}


\begin{thebibliography}{99}
\bibitem{armstrong} T. E. Armstrong (1983), \textit{Finitely Additive
F-processes}, Trans. Amer. Math. Soc., \textbf{279}, 271-295.

\bibitem{rao} K. P. S. Bhaskara Rao, M. Bhaskara Rao (1983), \textit{%
Theory of Charges}, Academic Press, London.

\bibitem{STAPRO} G. Cassese (2007), \textit{Decomposition of Supermartingales 
Indexed by a Linearly Ordered Set}, Stat. Prob. Lett., \textbf{77}, 795-802.

\bibitem{JOTP} G. Cassese (2008), \textit{Finitely Additive Supermartingales}, 
J. Theor. Prob., \textbf{18}, 23-54.

\bibitem{wins} G. Cassese (2009), \textit{Sure Wins, Separating Probabilitites 
and the Representation of Linear Functionals}, J. Math. Anal. Appl., \textit{forthcoming}.

\bibitem{chow} Y. S. Chow (1960), \textit{Martingales in a $\sigma$-Finite Measure 
Space Indexed by Directed Sets}, Trans. Amer. Math. Soc. \textbf{97}, 254-285.

\bibitem{de giosa} M. De Giosa, R. Mininni (1995) \textit{On the Dol\'{e}ans
Function of Set-indexed Submartingales}, Stat. Prob. Lett., \textbf{24}, 71-75.

\bibitem{dellacherie meyer} C. Dellacherie, P. A. Meyer (1982), \textit{%
Probabilities and Potential B}, North-Holland, Amsterdam.

\bibitem{doleans-dade} C. Dol\'{e}ans-Dade (1968), \textit{Existence du Processus 
Croissant Naturel Associ\'{e} \`{a} un Potentiel de la Classe ($D$)}, 
Z. Wahrsch. Verw. Geb. \textbf{9}, 309-314.

\bibitem{dozzi} M. Dozzi, B. G. Ivanoff, E. Merzbach (1994), \textit{Doob-Meyer
Decomposition for Set-indexed Submartingales}, J. Theor. Prob., \textbf{7}, 499-525.

\bibitem{bible} N. Dunford, J. Schwartz (1988), \textit{Linear Operators. Part I}, 
Wiley, New York.

\bibitem{fisk} D. Fisk (1965), \textit{Quasi-martingales}, Trans. Amer. Math. Soc.,
\textbf{120}, 369-389.

\bibitem{follmer} H. F\"{o}llmer (1973), \textit{On the Representation
of Semimartingales}, Ann. Prob., \textbf{5}, 580-589.

\bibitem{hurzeler} H. E. H\"{u}rzeler (1985), \textit{The Optional Sampling Theorem 
for Processes Indexed by a Partially Ordered Set}, Ann. Prob., \textbf{13}, 1224-1235.

\bibitem{ivanoff}B. G. Ivanoff, E. Merzbach (1995), \textit{Stopping and Set-indexed 
Local Martingales}, Stochastic Process. Appl. \textbf{57}, 83-98.

\bibitem{kurtz} T. G. Kurtz (1980), \textit{The Optional Sampling
Theorem for Martingales Indexed by Directed Sets}, Ann. Prob. \textbf{8},
675-681.

\bibitem{mertens} J. F. Mertens (1972), \textit{Processus Stochastiques G\'{e}n\'{e}raux 
et Surmartingales}, Z. Wahrsch. Verw. Geb. \textbf{22}, 45-68.

\bibitem{metivier pellaumail} M. Metivier, J. P. Pellaumail (1975), \textit{On Dol%
\'{e}ans-F\"{o}llmer Measure for Quasi-Martingales}, Illinois J. Math. \textbf{19%
}, 491-504.

\bibitem{meyer} P. A. Meyer (1962), \textit{A Decomposition Theorem for 
Supermartingales}, Illinois J. Math.\textbf{6}, 193-205.

\bibitem{meyer uniqueness} P. A. Meyer (1963), \textit{Decomposition of
Supermartingales: The Uniqueness Theorem}, Illinois J. Math.\textbf{7}, 1-17.

\bibitem{norberg} T. Norberg (1989), \textit{Stochastic Integration on Lattices}, Tech.
Report, Chalmers Univ. of Tech., Univ. of G\"{o}teborg.

\end{thebibliography}
\end{document}